\newtheorem{thm}{Theorem}[section]
\newtheorem{lem}[thm]{Lemma}
\theoremstyle{definition}
\newtheorem*{question}{Question}
\newtheorem{defin}[thm]{Definition}
\theoremstyle{remark}
\newtheorem*{rem}{Remark}
\newcommand{\M}{\mathcal{M}}
\DeclareMathOperator{\conv}{conv}
\begin{document}

\title{On a Helly-type question for central symmetry}
\author{Alexey~Garber}
\author{Edgardo Roldán-Pensado}

\address{School of Mathematical \& Statistical Sciences, The University of Texas Rio Grande Valley}
\email{alexeygarber@gmail.com}

\address{Centro de Ciencias Matemáticas, UNAM campus Morelia}
\email{e.roldan@im.unam.mx}

\begin{abstract}
We study a certain Helly-type question by Konrad Swanepoel. Assume that $X$ is a set of points such that every $k$-subset of $X$ is in centrally symmetric convex position, is it true that $X$ must also be in centrally symmetric convex position? It is easy to see that this is false if $k\le 5$, but it may be true for sufficiently large $k$. We investigate this question and give some partial results.
\end{abstract}

\maketitle

\begin{center}
    \it Dedicated to Imre Bárány on his 70th birthday.
\end{center}

\section{Introduction}
The classical Carathéodory theorem in dimension $2$ can be stated in the following equivalent way:
Let $X$ be a set of points in the plane, if any $4$ points from $X$ are in convex positions then $X$ is in convex position. In 2010, Konrad Swanepoel \cite{KS} asked the following Helly-type question which was inspired by this formulation of Carathéodory's theorem.

For brevity, we say that a set of points is in \emph{c.s.c. position} (short for centrally symmetric convex position) if it is contained in the boundary of a centrally symmetric convex body.

\begin{question}
  Does there exist a number $k$ such that for any planar set $X$ the following holds: If any $k$ points from $X$ are in c.s.c position, then the whole set $X$ is in c.s.c. position.
\end{question}

It is clear from Carathéodory's theorem that $X$ should be in convex position. One can also see that $k\geq 6$ since any $5$ points in convex position are in c.s.c. position. This follows from the fact that any $5$ points pass through a quadric curve. Since the points must be in convex position, the points lie on an ellipse, parabola, a branch of a hyperbola or a union of two lines and in each of these cases there is a centrally symmetric convex body containing these points on its boundary.

It is not clear that such a $k$ exists although we suspect that it does. In this short note, we prove the following two results in Sections \ref{sec:9} and \ref{sec:curve}.

\begin{thm}\label{thm:9}
  There is a set $X$ consisting of $9$ points that is not in c.s.c. position such that any $8$ of its points are in c.s.c. position. This implies that, if $k$ exists, then $k\ge 9$.
\end{thm}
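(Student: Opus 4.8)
The plan is to exhibit an explicit nine-point configuration $X=\{p_1,\dots,p_9\}$ in convex position and to verify the two required properties by hand, using throughout the following elementary reformulation. Since affine images of centrally symmetric bodies are centrally symmetric and affine maps carry boundaries to boundaries, c.s.c.\ position is affine invariant, so $X$ may be normalized freely. Moreover a convex body $K$ is centrally symmetric about a point $o$ exactly when $2o-K=K$; hence a finite set $Y$ lies on the boundary of some body that is centrally symmetric about $o$ if and only if no point of $Y\cup(2o-Y)$ lies in the interior of $\conv(Y\cup(2o-Y))$ --- and then this hull is itself such a body. So ``$Y$ is in c.s.c.\ position'' is the purely finite statement that some point $o$ of the plane makes this hull condition hold, and confirming or refuting c.s.c.\ position comes down to analysing, as $o$ ranges over the plane, whether some point of $Y$ is forced into the interior of the moving hull $\conv(Y\cup(2o-Y))$.

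To keep that analysis finite and manageable, I would build rigidity into $X$ by requiring several triples of its points to be collinear: if three points of $Y$ lie on a line $\ell$, then on every body through $Y$ the line $\ell$ must be a supporting line carrying an edge (three collinear points on a convex curve force the segment between the outer two onto the curve), and by central symmetry $2o-\ell$ carries the opposite edge. With enough such triples the edge directions of any admissible body are prescribed, the body is squeezed into the intersection of the corresponding slabs, and the admissible centres $o$ are confined to a bounded polygonal region on which the requirement ``every point of $X$ lies on $\partial K$'' becomes a system of explicit linear inequalities in $o$; it is also convenient to give $X$ a symmetry so that its nine $8$-point subsets fall into a handful of orbits. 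For each orbit I would then exhibit an explicit centre and body showing the corresponding $8$-subset is in c.s.c.\ position: when the deleted point leaves the forced collinear structure intact the body can be taken to be the centrally symmetric polygon whose edges lie on the prescribed lines, and deleting a point only relaxes the linear inequalities to be met; when the deleted point destroys a triple, the corresponding edge direction is freed and that extra degree of freedom is exactly what lets the remaining eight points sit on a slightly rounded version of the polygon.

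The crux --- and the step I expect to be the main obstacle --- is showing that $X$ itself is \emph{not} in c.s.c.\ position, i.e.\ that no centre works. By contradiction, an admissible body would have all its relevant edge directions forced by the collinear triples of $X$ and its centre $o$ confined to the bounded region above, where the demand that all nine points lie on its boundary becomes an infeasible linear system. The real work lies in choosing the coordinates of the nine points so that this infeasibility genuinely holds while, simultaneously, deleting any single point restores feasibility --- these two requirements pull against each other (tight enough to fail for all nine, slack enough to succeed for every eight), and balancing them, together with the bookkeeping over the orbits of deleted points, is what pins down the configuration; once the coordinates are fixed, both halves reduce to finitely many elementary inequality checks. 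Finally, since every subset of a set in c.s.c.\ position is again in c.s.c.\ position, all subsets of $X$ of size at most eight are in c.s.c.\ position, so a valid $k\le 8$ as in the Question would force $X$ to be in c.s.c.\ position --- a contradiction --- whence $k\ge 9$.
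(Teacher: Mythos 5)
Your reduction of c.s.c.\ position to a condition on a centre $o$ (namely that $Y\cup(2o-Y)$ be in convex position) is sound and is exactly the paper's notion of an \emph{admissible centre}, and your closing deduction of $k\ge 9$ from the example is fine. The problem is that the heart of the theorem --- the existence of the nine-point set --- is never actually established. You describe a design philosophy (build in collinear triples to force edge directions, confine $o$ to a polygonal region, reduce everything to linear inequalities in $o$) and then state that choosing coordinates so that the resulting system is infeasible for all nine points but feasible after deleting any one is ``the real work'' and ``what pins down the configuration.'' That is precisely the step that constitutes the proof, and it is left undone: no coordinates are given, no verification is performed, and it is not even argued that the two competing requirements \emph{can} be balanced within your collinear-triple framework. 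As it stands this is a plan for a proof, not a proof; an existence theorem is not proved by describing the search space in which one intends to look for the witness.

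For comparison, the paper's construction avoids collinearity altogether: it takes a regular $9$-gon with vertices labelled $a_1,b_1,c_1,a_2,b_2,c_2,a_3,b_3,c_3$ and contracts the triangle $a_1a_2a_3$ towards the centre by a factor $0.93$. The threefold symmetry reduces the nine $8$-point subsets to two cases, for each of which an explicit admissible centre near the origin is exhibited; and $\M_X=\emptyset$ is shown by locating three triangles ($a_1b_2c_2$ and its rotates) whose ``centre parts'' are pairwise disjoint triangles, so their intersection --- which must contain $\M_X$ --- is empty. If you want to salvage your approach, you must either produce explicit coordinates and carry out the finitely many inequality checks you allude to, or give a structural argument showing that the forced-edge-direction constraints genuinely become infeasible only when all nine points are present.
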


\begin{thm}\label{thm:curve}
  Let $\Gamma$ be a closed curve such that any $6$ points of $\Gamma$ are in c.s.c. position, then $\Gamma$ bounds a centrally symmetric convex region.
\end{thm}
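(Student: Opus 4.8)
The plan is to reduce to the convex case and then argue by contradiction. Since every $4$-element subset of $\Gamma$ is in c.s.c.\ position, it is in convex position, so by Carathéodory's theorem $\Gamma$ lies on the boundary of its convex hull; as $\Gamma$ is a closed curve this means $\Gamma=\partial K$ for a convex body $K$, and it remains to prove that $K$ is centrally symmetric. It is convenient to reformulate the hypothesis: for six points $p_1,\dots ,p_6$ in convex position and a point $c$, there is a centrally symmetric convex body with centre $c$ carrying all $p_i$ on its boundary if and only if the twelve points $p_1,\dots ,p_6,2c-p_1,\dots ,2c-p_6$ are in (weakly) convex position, the minimal such body being $\conv\{p_i,\,2c-p_i\}$. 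Thus the hypothesis says that every six points of $\Gamma$ admit such a \emph{centre}. Note also that in the plane a hexagon cannot strictly contain its reflection in a point (equal areas), so for any realizing centre $c$ either the hexagon $\conv\{p_i\}$ is itself centrally symmetric, or $\conv\{p_i,2c-p_i\}$ strictly contains it.

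For the main step, suppose $K$ is not centrally symmetric; we want to exhibit six points of $\Gamma$ admitting no centre. Central symmetry of $K$ is equivalent to the odd function $u\mapsto h_K(u)-h_K(-u)$ on the unit circle being the restriction of a linear functional (the centre then being half of its vector), so we may fix three directions $u_1,u_2,u_3$ for which the values $h_K(u_j)-h_K(-u_j)$ are not simultaneously matched by any single linear functional. Let $p_j^{\pm}\in\Gamma$ be contact points of the supporting lines of $K$ with outer normals $\pm u_j$, and take the six points $p_1^{+},p_1^{-},p_2^{+},p_2^{-},p_3^{+},p_3^{-}$ in their cyclic order on $\Gamma$. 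If a centre $c$ realized them, then separating each of these points (and its reflection) from the remaining eleven by a line and rewriting the resulting inequalities gives, for each $j$, a constraint confining $\langle c,u_j\rangle$ to an interval positioned relative to $h_K(u_j)-h_K(-u_j)$; collecting the three constraints would force $\big(h_K(u_j)-h_K(-u_j)\big)_{j=1}^{3}$ to be reproduced by the single functional $x\mapsto\langle 2c,x\rangle$, contradicting the choice of the $u_j$. Hence $K$ is centrally symmetric. This also explains the role of $\Gamma$ being a full curve rather than a finite set: we are free to place the six points at the exact extreme points in whatever directions we please.

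The delicate part is that the realizing body $C$ bears no a priori relation to $K$ — its supporting line at $p_j^{+}$ need not have normal $u_j$ — so the data squeezed out of a single direction is only an interval, not a point. One must therefore choose the $u_j$ (and, when $\Gamma$ contains segments, choose carefully which endpoint of a flat face to use) so that the three intervals are genuinely incompatible, presumably by tuning the $u_j$ to where $u\mapsto h_K(u)-h_K(-u)$ deviates most from linear, and dispose by a compactness argument of the finitely many degenerate positions of $c$ (several of the twelve points colliding or becoming collinear). As a cross-check I would also attempt the argument ``from outside'': if every inscribed hexagon of $\Gamma$ were in c.s.c.\ position, one would hope, using hexagons whose vertex sets grow dense in $\Gamma$, to conclude that a single centrally symmetric body carries all of $\partial K$ on its boundary, forcing it to equal $K$; the obstacle there is upgrading ``each hexagon has a centre'' to ``one centre serves all,'' i.e.\ a finite-intersection property for the sets of admissible centres, for which a Helly-type argument would suffice provided these planar sets are convex.
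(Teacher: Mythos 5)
Your reduction to the convex case and your reformulation via admissible centres (six points are in c.s.c.\ position with centre $c$ iff the twelve points $p_i$, $2c-p_i$ are in convex position) are both correct and match the framework of the paper. But the core of the argument — producing six points of $\Gamma$ with \emph{no} admissible centre whenever $K$ is not centrally symmetric — is not actually carried out, and the step you flag yourself as ``delicate'' is exactly where the proof is missing. The assertion that the three constraints coming from the antipodal pairs $p_j^{\pm}$ ``would force $\bigl(h_K(u_j)-h_K(-u_j)\bigr)_{j=1}^{3}$ to be reproduced by $x\mapsto\langle 2c,x\rangle$'' is unjustified: as you note, the realizing body need not support the $p_j^{\pm}$ in the directions $\pm u_j$, so each pair only confines $\langle c,u_j\rangle$ to an interval whose width depends on the positions of the other four points. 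For six points in general convex position the set of admissible centres is a genuinely two-dimensional region (an intersection of unions of four convex pieces, one per triangle), so there is no reason three such interval constraints must be incompatible for every non-symmetric $K$; ``tuning the $u_j$ to where $h_K(u)-h_K(-u)$ deviates most from linear'' is a hope, not an argument.

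The missing idea, which the paper supplies, is to make four of the six points an \emph{inscribed parallelogram}: for a parallelogram the admissible-centre set degenerates to the union of two lines through its centre (Lemma \ref{lem:parallelogram}), and two further points chosen near a smooth point then pin the centre down to a single point or empty the set. Running this for all inscribed parallelograms through a fixed smooth point shows the reflection in one fixed point maps $\Gamma$ to itself; making that work also needs the facts that parallel supporting lines meet $\Gamma$ in segments of equal length (Lemma \ref{lem:length}) and the tangent statement of Lemma \ref{lem:inscribed-para}, plus a separate case when $\Gamma$ contains line segments. Your ``cross-check from outside'' runs into the obstruction you already identify — the admissible-centre sets are not convex, so no off-the-shelf Helly argument applies — and so does not close the gap either.
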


Before proving these theorems, we describe a way to decide whether a finite set $X$ is in c.s.c. position or not. For more information on Carathéodory's theorem and Helly-type theorems we recommend \cite{Eck} and \cite{Mat}.

\section{Centrally symmetric convex position}
We start with a useful definition.

\begin{defin}
  Let $X$ be a point set and $O$ be a point. The set $X_O$ denotes the reflection of $X$ with respect to $O$, i.e., $X_O=2O-X$.
  If $X\cup X_O$ is in convex position then we say that $O$ is an \emph{admissible center for $X$}, the set of all admissible centers is denoted by $\M_X$.
\end{defin}

Swanepoel's question can be reformulated in terms of admissible centers, since $X$ is in c.s.c. position if and only if $\M_X$ is non-empty.
The main goal of this section is to give a simple way of constructing $\M_X$.
We start with the simplest possible case. The description of the set of admissible center for a finite set $X$ can be obtained from the following simple lemmas.

\begin{lem}
  Let $\triangle=\{a,b,c\}$ be three non-collinear points. The three lines passing through the midpoints of the sides of $\conv(\triangle)$ divide the plane into $7$ regions. The set $\M_\triangle$, shown in Figure \ref{fig:triang}, is the union of the closed components of this division that do not intersect $\triangle$.
\end{lem}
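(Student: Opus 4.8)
The plan is to produce an explicit algebraic description of $\M_\triangle$ in terms of signed areas and then recognise it as the four cells of the arrangement that miss $\triangle$. Write $\triangle=\{a,b,c\}$, put $a'=2O-a$, $b'=2O-b$, $c'=2O-c$, and let $Y=\triangle\cup\triangle_O=\{a,b,c,a',b',c'\}$. Since $a,b,c$ are not collinear, $\conv(Y)$ is a convex body, and $Y$ is in convex position exactly when no point of $Y$ lies in $\operatorname{int}\conv(Y)$; moreover a point of $Y$ lies in $\operatorname{int}\conv(Y)$ if and only if it lies in the interior of the convex hull of the other five. As $Y$ is centrally symmetric about $O$, the point $a$ is interior to $\conv(Y)$ iff $a'$ is; and by the symmetry among $a,b,c$ it therefore suffices to decide when $a$ is interior to $\conv\{b,c,a',b',c'\}$.

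To do that I would use supporting lines: $a$ fails to be interior to that hull exactly when there is a vector $n$ with $n\cdot a\ge n\cdot p$ for all $p\in\{b,c,a',b',c'\}$. Substituting $p'=2O-p$ and subtracting $n\cdot a$, the inequalities coming from $b,c,b',c'$ collapse to $n\cdot(a-O)\ge\bigl|n\cdot(b-O)\bigr|$ and $n\cdot(a-O)\ge\bigl|n\cdot(c-O)\bigr|$, while the one from $a'$ becomes $n\cdot(a-O)\ge0$, which is then automatic. Hence $a\in\operatorname{int}\conv(Y)$ if and only if $a-O$ is interior to $\conv\{\pm(b-O),\pm(c-O)\}$, that is, iff $a$ is interior to the parallelogram $\Pi_a:=\conv\{b,c,2O-b,2O-c\}$ (it is a parallelogram because its diagonals $[b,2O-b]$ and $[c,2O-c]$ bisect each other at $O$). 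Writing $a$ in barycentric coordinates $a=\alpha b+\beta c+\gamma O$ with respect to the triangle $bcO$ gives $a-O=\alpha(b-O)+\beta(c-O)$, so $a\in\Pi_a$ iff $|\alpha|+|\beta|\le1$, with the interior corresponding to strict inequality. Since $\alpha=[a,c,O]/[b,c,O]$ and $\beta=[b,a,O]/[b,c,O]$, where $[\cdot,\cdot,\cdot]$ denotes signed area, the condition ``$a\notin\operatorname{int}\conv(Y)$'' reads $\bigl|[c,a,O]\bigr|+\bigl|[a,b,O]\bigr|\ge\bigl|[b,c,O]\bigr|$. Doing the same for $b$ and $c$ and combining, one gets: $O\in\M_\triangle$ if and only if the three nonnegative numbers $\bigl|[b,c,O]\bigr|$, $\bigl|[c,a,O]\bigr|$, $\bigl|[a,b,O]\bigr|$ satisfy the triangle inequality.

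It then remains to match this with the seven-region picture. Orient $\triangle$ so that $S:=[a,b,c]>0$, and set $x=[b,c,O]$, $y=[c,a,O]$, $z=[a,b,O]$; these are affine functions of $O$ with $x+y+z=S$. The midline through the midpoints of $ab$ and $ac$ is precisely the line $\{x=S/2\}$, and the other two midlines are $\{y=S/2\}$ and $\{z=S/2\}$; the vertex $a$ (where $x=S$, $y=z=0$) lies in the cell $\{x>S/2,\ y<S/2,\ z<S/2\}$, and similarly for $b$ and $c$. A short case analysis on the signs of $x,y,z$ --- with $x+y+z=S>0$ the relevant cases are ``$O$ inside $\triangle$'', ``$O$ beyond exactly one of the lines $ab,bc,ca$'', and ``$O$ beyond exactly two of them'' --- then shows that the triangle inequality for $|x|,|y|,|z|$ holds exactly when $O$ avoids the three cells containing a vertex of $\triangle$, i.e. exactly when $O$ lies in the closure of one of the remaining four cells. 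This is the asserted description of $\M_\triangle$.

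I expect the last step, the sign bookkeeping, to be the most tedious point; one must also watch the degenerate positions of $O$ (on a side line of $\triangle$, where $\Pi_a$ flattens to a segment, or on a midline, where $Y$ acquires collinear points), but in each of these the supporting-line criterion together with $x+y+z=S$ still settles the matter directly.
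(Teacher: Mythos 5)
The paper states this lemma without proof (it is one of the ``simple lemmas'' accompanying Figure \ref{fig:triang}), so there is no argument of the authors to compare against; judged on its own, your proof is correct. The key reduction --- that $a$ is interior to $\conv(\triangle\cup\triangle_O)$ iff $a$ is interior to the parallelogram $\conv\{b,c,2O-b,2O-c\}$, obtained by pairing each supporting inequality $n\cdot a\ge n\cdot p$ with its reflected partner --- is sound, and recasting admissibility as the triangle inequality for $\bigl|[b,c,O]\bigr|$, $\bigl|[c,a,O]\bigr|$, $\bigl|[a,b,O]\bigr|$ is a clean, quantitative way to get the seven-cell picture. I checked the sign bookkeeping you defer: with $x+y+z=S>0$, in the open cell $\{x>S/2,\ y<S/2,\ z<S/2\}$ one always gets $|y|+|z|<|x|$ (the subcases by the signs of $y,z$ use exactly the inequalities available), while on the closed medial triangle all three coordinates lie in $[0,S/2]$ and all inequalities hold, and on the closed cell $\{y\ge S/2,\ z\ge S/2\}$ one has $x\le 0$ and the inequalities reduce precisely to $y\ge S/2$ and $z\ge S/2$; the degenerate position $O\in bc$ also comes out right (only the midpoint of $bc$ survives, matching the picture). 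Two points worth making explicit in a final write-up: the supporting functional $n$ must be taken nonzero, and you are reading ``convex position'' as ``no point of $Y$ interior to $\conv(Y)$'' rather than ``every point a vertex'' --- the correct choice here, since the lemma asserts $\M_\triangle$ is closed and c.s.c.\ position only requires the points to lie on the boundary of a convex body.
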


The set $\M_\triangle$ is naturally represented as the union of $4$ convex subsets. We call these subsets the \emph{center-part}, \emph{$a$-part}, \emph{$b$-part} and \emph{$c$-part} as in Figure \ref{fig:triang}.

\begin{figure}[ht]
  \includegraphics{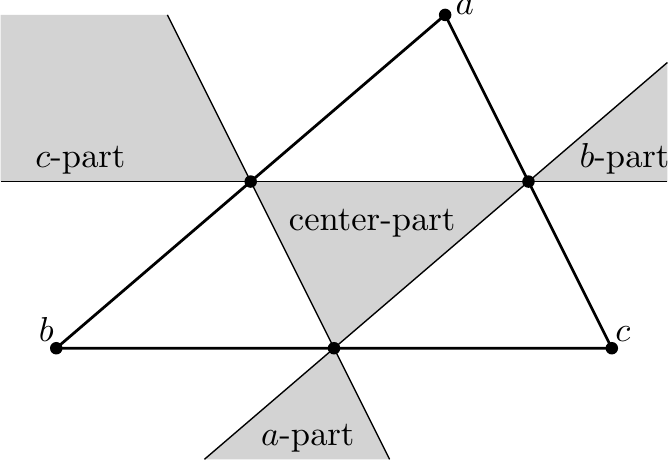}
  \caption{Set of admissible centers for a triangle.}
  \label{fig:triang}
\end{figure}

\begin{lem}\label{lem:intersection}
  For a given set $X$ in convex position we have that
  $$\M_X=\bigcap \left\{\M_Y:Y\subset X,\#(Y)=3\right\}.$$
\end{lem}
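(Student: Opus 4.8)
The plan is to prove the two inclusions separately, with the nontrivial direction being that a point lying in every $\M_Y$ (for $3$-subsets $Y$) is actually an admissible center for all of $X$.

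For the easy inclusion $\M_X \subseteq \bigcap_Y \M_Y$: if $O$ is an admissible center for $X$, then $X \cup X_O$ is in convex position. For any $3$-subset $Y \subset X$, the set $Y \cup Y_O$ is a subset of $X \cup X_O$, hence also in convex position (any subset of a set in convex position is in convex position), so $O \in \M_Y$. This direction needs no work beyond this observation.

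For the reverse inclusion, suppose $O \in \M_Y$ for every $Y \subset X$ with $\#(Y) = 3$; I must show $X \cup X_O$ is in convex position. Here I would invoke the planar Carathéodory-type statement quoted in the introduction: a planar set is in convex position iff every $4$ of its points are in convex position. So it suffices to check that an arbitrary $4$-subset $Z$ of $X \cup X_O$ is in convex position. Write $Z = (Z \cap X) \cup (Z \cap X_O)$; by relabeling via the reflection (which preserves convex position and swaps $X$ with $X_O$), I may assume $\#(Z \cap X) \geq \#(Z \cap X_O)$, so the possibilities are $(4,0)$, $(3,1)$, and $(2,2)$. The $(4,0)$ case is immediate since $X$ itself is in convex position by hypothesis. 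For $(3,1)$, write $Z = \{p,q,r\} \cup \{s_O\}$ with $p,q,r,s \in X$; I want a single $3$-subset $Y$ of $X$ whose reflected doubling $Y \cup Y_O$ contains all of $Z$. Taking $Y = \{p,q,s\}$ gives $p,q,s_O \in Y \cup Y_O$ but not $r$; the honest fix is to note we actually need $Z \subseteq Y \cup Y_O$ for some $3$-subset $Y$, which fails in general here — so instead I would argue directly: since $O \in \M_{\{p,q,r\}}$ and $O \in \M_{\{p,q,s\}}$ etc., and use that these four points plus possibly more lie in convex position. The cleanest route is the $(2,2)$ and $(3,1)$ cases follow because any such $4$ points are contained in $Y \cup Y_O$ for a suitable $Y$ of size $\le 3$: in case $(3,1)$, $\{p,q,r,s_O\}$... this does not embed, so the real argument must be that convex position of a $4$-set is detected by a $3$-set after reflection only when one can collapse a reflected point back.

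Reconsidering, the correct and expected argument is: to test convex position of $Z \subseteq X\cup X_O$ with $\#Z=4$, reflect any subset of the ``$X_O$ part'' back through $O$ if that reduces the count on one side — but reflection of only some points does not preserve convex position. So the genuine key step, and the main obstacle, is handling the mixed $4$-subsets: given $p,q,r \in X$ and $s \in X$ with the point $s_O$, one shows $\{p,q,r,s_O\}$ is in convex position by observing it lies inside $\{p,q,s\} \cup \{p,q,s\}_O = \{p,q,s,p_O,q_O,s_O\}$ — wait, that contains $s_O$, $p$, $q$ but not $r$. The resolution is that one does \emph{not} need $Z$ inside a single $Y\cup Y_O$; rather, one uses that $O\in\M_Y$ for \emph{all} triples simultaneously, together with the structural description of $\M_\triangle$ as the center-part and three vertex-parts. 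Concretely, I would argue by the contrapositive: if $X \cup X_O$ is not in convex position, some point is in the convex hull of three others; tracing which of these four points lie in $X$ versus $X_O$ and reflecting appropriately, I extract three points of $X$ for which $O$ fails to be admissible, contradicting the hypothesis. I expect the careful bookkeeping of these cases — especially ruling out that a reflected point $s_O$ lies inside $\conv(p,q,r)$ with $p,q,r\in X$ using only $3$-point information — to be the crux, and it is presumably where the ``center-part / vertex-part'' structure of $\M_\triangle$ from the previous lemma does the real work.
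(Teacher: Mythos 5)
The paper states this lemma without proof, so there is nothing to compare against; judged on its own, your attempt has a genuine gap that you yourself flag but do not close. The inclusion $\M_X\subseteq\bigcap\M_Y$ and the reduction (via planar Carath\'eodory) to showing that no point of $X\cup X_O$ lies in the convex hull of three others are both fine, as is the implicit observation that a witness $z\in\conv(w_1,w_2,w_3)$ whose four points involve at most three distinct ``bases'' in $X$ (where the base of $w$ is $w$ if $w\in X$ and $2O-w$ otherwise) is already contained in $Y\cup Y_O$ for a single triple $Y$, contradicting $O\in\M_Y$. What is missing is exactly the case you identify as the crux: four distinct bases, e.g.\ $a\in\conv(b_1,b_2,c')$ with $a,b_1,b_2,c\in X$ distinct and $c'=2O-c$. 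Saying you would ``trace which points lie in $X$ versus $X_O$ and reflect appropriately'' is not an argument: reflecting only some of the four points does not preserve the containment, and no single triple $Y$ has $Y\cup Y_O$ containing all four points.

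The step that closes this case, absent from your write-up, is a covering fact. Let $B=\{b_1,b_2,c\}$. If $O\notin\M_B$ you are done, so assume $H=\conv(B\cup B_O)$ is a centrally symmetric hexagon with all six points as vertices. Fanning $H$ from its center $O$ into the six triangles $\conv(O,v,w)$ on adjacent vertices $v,w$, and noting that adjacent vertices of a centrally symmetric hexagon are never an antipodal pair, each such triangle lies in the parallelogram $\conv(u,v,u',v')$ for a two-element set of bases $\{u,v\}\subseteq B$ (that triangle is one of the four pieces cut out by the diagonals of the parallelogram, which meet at $O$). Hence $H$ is covered by the three parallelograms $\conv(\{u,v\}\cup\{u,v\}_O)$ with $\{u,v\}\subseteq B$. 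Since $a\in\conv(b_1,b_2,c')\subseteq H$, the point $a$ lies in one of them, and then $Y=\{a,u,v\}$ satisfies $a\in\conv\bigl((Y\cup Y_O)\setminus\{a\}\bigr)$, i.e.\ $O\notin\M_Y$ --- which is precisely the two-base reduction you already have. (Degenerate situations, e.g.\ $X\cap X_O\neq\emptyset$ or collinear points, need separate but routine treatment.) Without this covering argument, or an equivalent one such as the supporting-line argument the paper later uses to prove Theorem \ref{thm:tallest} (from which this lemma also follows), your proof is incomplete.
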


These last two lemmas provide us with a way to construct the set of admissible centers of a set with $n$ points in convex position as the intersection of $\binom n3$ sets, each of which is the union of four convex sets. We see below how we can achieve the same thing using fewer sets.

\begin{defin}
  Assume $X$ is a finite set of points in convex position such that $X$ is not contained in a line. Let $ab$ be a side of $\conv(X)$ and let $c\in X$ be a farthest point from the line $ab$. We call the triangle $abc$ a \emph{tallest triangle of $X$ with respect to side $ab$}.
\end{defin}

The tallest triangle has appeared before, at least as source of interesting questions for mathematical Olympiads (see e.g. \cite{TT} or \cite{TT2}).

\begin{thm}\label{thm:tallest}
  If $X$ is a finite set of points in convex position, then the set of admissible centers for $X$ is the intersection of the sets of admissible centers of the tallest triangles of $X$, i.e.,
  $$\M_X=\bigcap \left\{ \M_{\{a,b,c\}}: abc \text{ is a tallest triangle of } X \right\}.$$
\end{thm}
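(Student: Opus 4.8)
The plan is to show a two-sided inclusion. Since every tallest triangle is in particular a $3$-subset of $X$, Lemma \ref{lem:intersection} immediately gives $\M_X\subseteq\bigcap\{\M_{\{a,b,c\}}:abc\text{ a tallest triangle}\}$, so the content is the reverse inclusion: if $O$ is an admissible center for every tallest triangle of $X$, then $O\in\M_X$. Again by Lemma \ref{lem:intersection}, it suffices to prove that $O\in\M_{\triangle}$ for \emph{every} $3$-subset $\triangle=\{a,b,c\}\subseteq X$, not just the tallest ones. So the heart of the argument is a monotonicity statement: enlarging a triangle to a tallest one only shrinks its set of admissible centers, i.e. if $abc$ is contained in a tallest triangle $ab'c'$ sharing the relevant structure, then $\M_{ab'c'}\subseteq\M_{abc}$. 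This is false for arbitrary pairs of nested triangles, so the tallest-triangle geometry must be used.

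The key step I would carry out is the following reduction. Fix a $3$-subset $\triangle=\{a,b,c\}$ of $X$. I claim one can find a tallest triangle $T$ of $X$ such that $\M_T\subseteq\M_\triangle$. To build $T$: let $ab$ be the side of $\conv(\triangle)$, extend it (if necessary) to a side of $\conv(X)$ — more precisely, take the edge $a'b'$ of $\conv(X)$ whose supporting line is "on the same side" as $ab$, meaning $c$ and all of $X$ lie in the closed halfplane bounded by line $a'b'$, and line $ab$ separates $c$ from line $a'b'$ or coincides with a translate between them. Then let $T=a'b'c'$ be a tallest triangle of $X$ with respect to $a'b'$, where $c'$ is a farthest point of $X$ from line $a'b'$. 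The geometric facts I would then verify are: (1) the three midlines of $T$ are "at least as far out" as the corresponding midlines of $\triangle$ in the appropriate directions, because the side $a'b'$ lies outside line $ab$ and $c'$ is at least as far from $a'b'$ as $c$ is from $ab$; and (2) consequently each of the four convex pieces (center-part, and the three vertex-parts) of $\M_T$ is contained in the corresponding region for $\M_\triangle$. Here the explicit description of $\M_\triangle$ from the first lemma — as the union of the closed components of the midline arrangement not meeting the triangle — is exactly what makes the comparison tractable: each bounding midline of $\M_\triangle$ gets pushed outward when we pass to $T$, so every admissible center of $T$ survives as an admissible center of $\triangle$.

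The main obstacle I anticipate is handling the "vertex-parts" correctly when the enlargement moves two vertices at once (from $a,b$ to $a',b'$ and from $c$ to $c'$), since then all three midlines of the triangle move, and one must check that no vertex-part of $\M_\triangle$ shrinks in a way that would lose a point of $\M_T$ — equivalently, that the monotonicity is genuinely coordinated across all three midlines. I would address this by doing the enlargement in stages: first slide $c\to c'$ keeping $a,b$ fixed (a one-parameter family where only the two midlines through $c$ move, and one checks directly that $\M$ grows — wait, shrinks — in the right way), then slide the base $ab\to a'b'$ along the boundary of $\conv(X)$ in elementary steps, each replacing one endpoint by an adjacent vertex of $\conv(X)$; at each elementary step only a bounded, explicitly describable change occurs and the inclusion $\M_{\text{new}}\subseteq\M_{\text{old}}$ can be checked by the same midline-comparison. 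Composing the stages yields $\M_T\subseteq\M_\triangle$, and intersecting over all $3$-subsets $\triangle$ — each dominated by some tallest triangle — together with the trivial forward inclusion completes the proof.
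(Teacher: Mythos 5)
Your forward inclusion and the reduction via Lemma \ref{lem:intersection} to showing $O\in\M_\triangle$ for every $3$-subset $\triangle\subseteq X$ are both fine, but the claim that carries your whole argument --- that every $3$-subset $\triangle$ is dominated by a \emph{single} tallest triangle $T$ with $\M_T\subseteq\M_\triangle$ --- is false. Take $X$ to be the vertex set $v_1,\dots,v_6$ of a regular hexagon centered at the origin with $v_1=(1,0)$, and $\triangle=\{v_1,v_3,v_5\}$. Every tallest triangle of $X$ has the form $\{v_i,v_{i+1},v_{i+3}\}$ or $\{v_i,v_{i+1},v_{i+4}\}$, and the midpoint of the hexagon edge $v_iv_{i+1}$ always lies in its set of admissible centers: for instance, with $O=m_{12}=(0.75,0.433)$ the set $\{v_1,v_2,v_4\}\cup\{v_1,v_2,v_4\}_O=\{(-1,0),(1,0),(2.5,0.866),(0.5,0.866)\}$ is a convex quadrilateral. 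But none of the six edge-midpoints belongs to $\M_{\{v_1,v_3,v_5\}}$: for the same $O=m_{12}$, the reflected points $2O-v_3=(2,0)$ and $2O-v_5=(2,1.732)$ together with $v_3$ and $v_5$ already contain $v_1$ in the interior of their convex hull, and the symmetry group of $\{v_1,v_3,v_5\}$ carries $m_{12}$ to all six midpoints. Hence $\M_T\not\subseteq\M_{\{v_1,v_3,v_5\}}$ for \emph{every} tallest triangle $T$. The theorem is saved only because the intersection over all tallest triangles is contained in $\M_\triangle$; no single one suffices, so the staged midline-monotonicity you propose cannot be repaired into a one-triangle domination, and the geometric claim (1) about midlines being "pushed outward" is where it breaks (moving the apex farther from the base pushes the midline parallel to the base into territory that need not belong to any part of $\M_\triangle$).

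For comparison, the paper does not route the hard inclusion through Lemma \ref{lem:intersection} at all. Given $O$ in the intersection over all tallest triangles, it fixes a point $a\in X$ and exhibits a supporting line of $\conv(X\cup X_O)$ at $a$ directly: it looks at the tallest triangles over the two edges of $\conv(X)$ incident to $a$, and, when $O$ sits in the central parts of both, at a third tallest triangle $apq$ over an edge $pq$ of $\conv(X)$ chosen so that $O$ lies inside $apq$; whichever part of the relevant $\M$ contains $O$ hands over the supporting direction ($ab$, $ad$, or the direction of $pq$). If you want to keep your framework, you would need to show that $\M_\triangle$ contains the intersection of the $\M_T$ over \emph{several} tallest triangles simultaneously --- which is essentially the paper's argument in disguise.
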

\begin{proof}
  The set $\M_X$ is included in the intersection on the right-hand side of the formula, so we only need to prove that any point from the intersection is in $\M_X$.
  
  Let $O$ be any point from the intersection and let $a$ be any point from $X$. We will show that it is possible to find a supporting line of $\conv(X\cup X_O)$ at $a$.
  
  Let $b$ be one of the neighbors of $a$ on the boundary of $\conv(X)$, say in the counter-clockwise direction. Let $abc$ be a tallest triangle of $X$ with respect to $ab$. If $O$ lies in $a$-part or $b$-part of the admissible set for triangle $abc$, then $ab$ is a supporting line for $\conv(X\cup X_O)$. Therefore $O$ lies in the $c$-part or in the central part of $\M_{\{a,b,c\}}$.
  
  Similarly, if $d$ is the other neighbor of $a$ on the boundary of $X$ (in the clockwise direction), and $ade$ is a tallest triangle of $X$ with respect to $ad$, then $O$ lies in the central part or in the $e$-part of $\M_{\{a,d,e\}}$, otherwise we are done.
 
  There are two possibilities for the positions of $c$ and $e$. Either they coincide, or $e$ is in the clockwise direction from $c$. In the case $c=e$, the only admissible point from $\M_{\{a,b,c\}}$ is the midpoint of $ac$, which also belongs to the $b$-part of $\M_{\{a,b,c\}}$. So, the line $ab$ is a supporting line of $\conv(X\cup X_O)$ as we have shown before.
  
 \begin{figure}[ht]
   \includegraphics{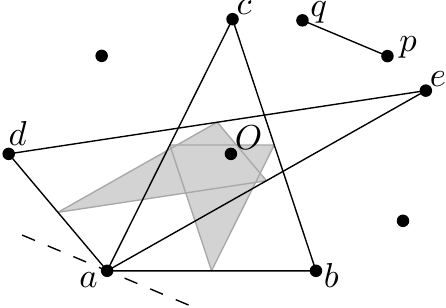}
   \caption{A supporting line of $\conv(X\cup X_O)$ at $a$.}
   \label{fig:tall}
 \end{figure}
 
  In the latter case, as shown in Figure \ref{fig:tall}, the point $O\in abc\cap ade$, and $c$ and $e$ are connected by a sequence of sides of $X$. Then there is a side $pq$ of $X$ in the angle $\angle cae$ such that $O$ is inside triangle $apq$. It is not difficult to see that $apq$ is a tallest triangle of $X$. Since $O$ is inside $apq$ and in $\M_{\{a,p,q\}}$, it is in the central part of this set of admissible centers. It follows that the line parallel to $pq$ through $a$ is also a supporting line of $\conv(X\cup X_O)$.
\end{proof}

\section{Example showing \texorpdfstring{$k\ge 9$}{k >= 9}}\label{sec:9}

In this section we prove Theorem \ref{thm:9} by giving an explicit example of a set $X$ with $9$ points such that $\M_X=\emptyset$, but $\M_Y\neq\emptyset$ for every $Y\subset X$ with $8$ points.

\begin{proof}[Proof of Theorem \ref{thm:9}]
	Start with a regular $9$-gon with center $O$ and label its vertices as $a_1$, $b_1$, $c_1$, $a_2$, $b_2$, $c_2$, $a_3$, $b_3$, $c_3$ in counter-clockwise order. Now, take the triangle $a_1a_2a_3$ and, with center $O$, scale it down by a factor of $0.93$. Then we are left with an almost regular $9$-gon such as the one shown in Figure \ref{fig:9gon}. This will be the set $X$.

\begin{figure}[ht]
  \includegraphics{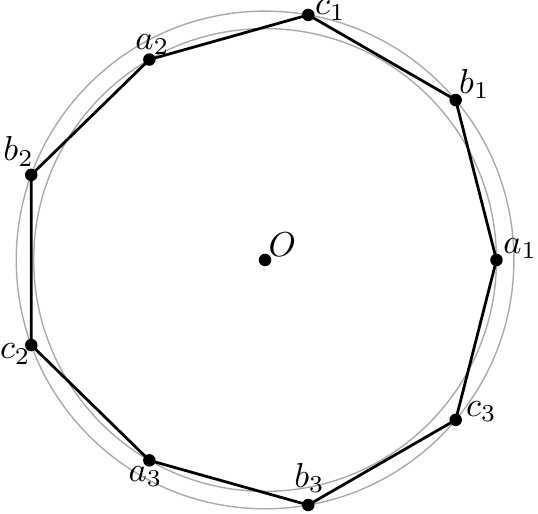}
  \caption{The $9$-gon for Theorem \ref{thm:9}.}
  \label{fig:9gon}
\end{figure}

A subset $Y$ of $X$ with $8$ points can be of two types, depending on whether or not it is missing a point $a_i$ from $X$. For each of these, a point of $\M_Y$ close to $O$ will serve as an admissible center. If we choose coordinates so that $O=(0,0)$ and $b_1=(1,0)$, then points in $\M_Y$ corresponding to $Y=X\setminus\{a_1\}$ and $Y=X\setminus\{b_2\}$ are $(0.04,0)$ and $(0.02,0)$, respectively (see Figure \ref{fig:9gon2}).

\begin{figure}[ht]
  \includegraphics{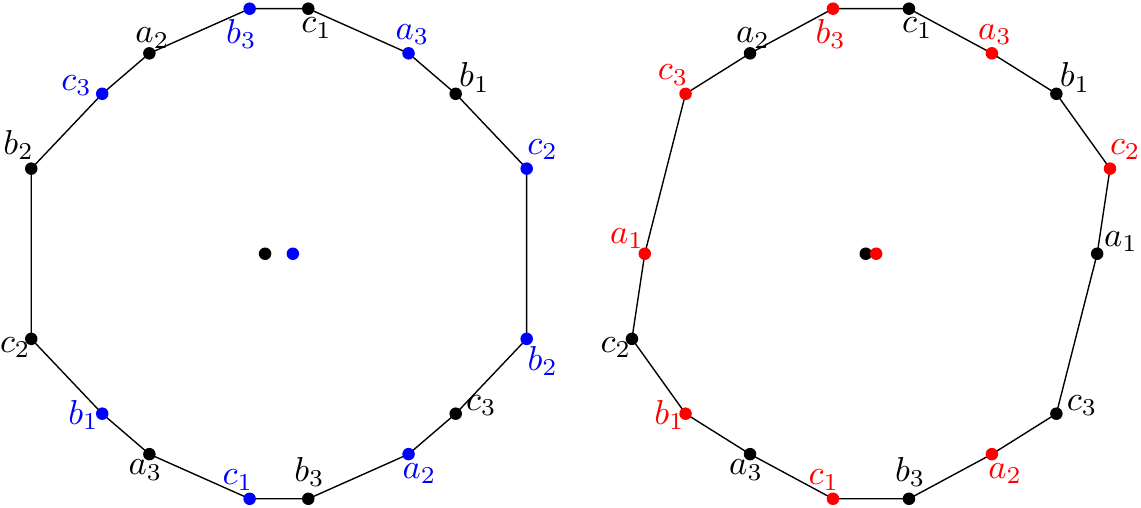}
  \caption{The original and reflected $8$-gons with their respective centers.}
  \label{fig:9gon2}
\end{figure}

All that is left is to show that $\M_X=\emptyset$. By Lemma \ref{lem:intersection}, we only need to consider the triangles determined by $X$. Let us consider first the triangle $a_1b_2c_2$, it is not hard to see that $\M_X$ must be a subset of the center part of $\M_{\{a_1,b_2,c_2\}}$. By the threefold symmetry of $X$, the same is true for the triangles $a_2b_3c_3$ and $a_3b_1c_1$. However, the center parts of these sets are triangles that do not intersect, so $\M_X$ must be empty.
\end{proof}

\section{The case of convex curves}\label{sec:curve}

In this section we show that for a convex curve $\Gamma$ the answer for Swanepoel's question is the least possible, i.e. $k=6$. For the remaining part of the section we assume that every 6 points of $\Gamma$ are in c.s.c. 

The proof of Theorem \ref{thm:curve} is based on the following simple fact, which can be proved easily using Lemma \ref{lem:intersection}.

\begin{lem}\label{lem:parallelogram}
  The set of admissible centers for the vertex-set of a parallelogram $P$ is the union the two lines passing through the center of $P$ and each parallel to a side of $P$.
\end{lem}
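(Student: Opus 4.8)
The plan is to combine the simplest case of Lemma~\ref{lem:intersection} with the observation that, for a centrally symmetric set, reflection is harmless. Let $O_0$ be the center of $P$. Since the reflection of a centrally symmetric set through a point $O$ is a translate of that set, we have $P_O = 2O - P = P + v$ with $v = 2(O - O_0)$. Consequently $O$ is an admissible center for the vertex set of $P$ if and only if the vertices of $P$ together with the vertices of the translate $P + v$ form a set in convex position. So the whole statement reduces to the following claim: the vertices of $P$ and $P + v$ form a set in convex position precisely when $v$ is parallel to a side of $P$ (with $v = 0$ allowed). Granting this, the admissible centers are the points $O = O_0 + \tfrac12 v$ with $v$ parallel to a side of $P$, which is exactly the union of the two lines through $O_0$, each parallel to a side of $P$.

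For the claim, one direction is immediate: if $v$ is parallel to one of the two families of parallel sides of $P$, then all eight vertices of $P$ and $P+v$ lie on the two lines carrying that family of sides, hence on the boundary of the (possibly degenerate) parallelogram they span, and in particular in convex position. For the other direction I would argue by contraposition. Central symmetry, convex position, and the set of admissible centers are affine notions, so after an affine change of coordinates we may assume that $P$ is the square with vertices $(\pm 1,\pm 1)$, so that $O_0$ is the origin; using the symmetries $(x,y)\mapsto(\pm x,\pm y)$ of this square we may further assume $v = (a,b)$ with $a > 0$ and $b > 0$ (the excluded cases $a = 0$ or $b = 0$ are exactly the ones already treated). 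A direct check then shows that the vertex $(1,1)$ of $P$ lies in the interior of the triangle with vertices $(1,-1)$ and $(-1,1)$ (two vertices of $P$) and $(1+a,1+b)$ (a vertex of $P+v$); the edge inequalities that need verifying come down to $a>0$ and $b>0$. Hence one of the eight points is interior to the convex hull of three of the others, so the eight points are not in convex position.

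Putting the two directions together identifies the set of admissible translation vectors $v$ with the union of the two lines through the origin parallel to the sides of $P$, and undoing $v = 2(O - O_0)$ gives the description of $\M_P$ in the statement. The step most in need of care is the affine normalization: one must check that ``not parallel to a side'' is preserved (an affine map sends sides to sides and side-parallel vectors to side-parallel vectors), and observe that possible coincidences among the eight points cause no trouble, since the three triangle vertices used above are always distinct from $(1,1)$ and from one another when $a,b>0$. Beyond that, I expect the only real work to be organizing the small case analysis — the direction of $v$ and the signs of its coordinates — so that the single interior-point computation covers every case.
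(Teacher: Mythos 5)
Your proof is correct, but it takes a genuinely different route from the one the paper intends. The paper says the lemma ``can be proved easily using Lemma~\ref{lem:intersection}'', i.e., by writing $\M_P$ as the intersection of the admissible-center sets of the four triangles spanned by the vertices of $P$ (each a union of four convex pieces cut out by midlines) and checking that this intersection is exactly the two lines through the center parallel to the sides. You instead exploit the central symmetry of $P$ itself: since $P=2O_0-P$, the reflection $P_O$ is the translate $P+2(O-O_0)$, which reduces the lemma to characterizing the vectors $v$ for which $P\cup(P+v)$ is in convex position. Your verification of that characterization is sound: the forward direction (all eight points lie on the two lines carrying one family of sides, hence on two edges of their convex hull) is consistent with the paper's convention that collinear points on the boundary of the convex hull still count as being in convex position, which is exactly what makes the two full lines, not just the center, admissible; and in the reverse direction the vertex $(1,1)$ of the normalized square does lie strictly inside the triangle with vertices $(1,-1)$, $(-1,1)$, $(1+a,1+b)$ whenever $a,b>0$, with the affine and sign reductions legitimate because convex position and side-parallelism are affine notions. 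What your approach buys is self-containedness and a cleaner picture --- it needs neither the explicit description of $\M_{\triangle}$ for a triangle nor Lemma~\ref{lem:intersection} --- at the cost of the normalization and case bookkeeping; the paper's route is a mechanical intersection of four already-described sets.
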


First we establish a few facts for $\Gamma$. Since $\Gamma$ is convex, then every point $x$ of $\Gamma$ has correctly defined one-sided tangents which are the best linear approximations of $\Gamma$ at $x$ in clockwise and counter-clockwise directions. If these lines coincide, then $\Gamma$ has a tangent at $x$ and we will call $x$ a {\it smooth point} of $\Gamma$. Due to the convexity of $\Gamma$, it may contain at most countably many non-smooth points.

\begin{lem}\label{lem:length}
If $\ell$ and $\ell'$ are two parallel supporting lines of $\Gamma$, then the lengths of the segments $\ell\cap\Gamma$ and $\ell'\cap\Gamma$ are equal.
\end{lem}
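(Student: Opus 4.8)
The plan is to use the hypothesis on $6$-point subsets together with Lemma~\ref{lem:parallelogram}. The idea is that if $\ell$ and $\ell'$ are parallel supporting lines, then $\ell\cap\Gamma$ and $\ell'\cap\Gamma$ are each either a point or a segment; pick (up to) two extreme points on each of the two sets and feed the resulting $\le 4$ points—padded if necessary to $6$ points by two more points of $\Gamma$—into the c.s.c.\ hypothesis. The key observation is that if $p,q\in\ell\cap\Gamma$ and $r,s\in\ell'\cap\Gamma$ with $pq$ and $rs$ on parallel lines, then for any convex body $K$ with these four points on $\partial K$, the chords $\ell\cap K$ and $\ell'\cap K$ are parallel chords of $K$; and if moreover $K$ is centrally symmetric with center $O$, then reflection through $O$ carries the chord on $\ell$ to the chord on $\ell'$ (since parallel supporting chords of a symmetric body are swapped by the symmetry), so these two chords have equal length. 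Since $pq\subseteq \ell\cap K$ and $rs\subseteq \ell'\cap K$, this does not immediately give $|pq|=|rs|$, so I must be more careful.

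The cleaner route is: let $[p,q]=\ell\cap\Gamma$ and $[r,s]=\ell'\cap\Gamma$ (allowing degenerate segments). Since $\Gamma$ is convex and $\ell,\ell'$ are parallel supporting lines, the four points $p,q,r,s$ are the vertices of a (possibly degenerate) parallelogram or trapezoid inscribed in $\Gamma$—actually $pq\parallel rs$, so $pqsr$ is a trapezoid. I would then argue that $p,q,r,s$ together with two generic further points of $\Gamma$ admit a centrally symmetric convex body $K$ on whose boundary they all lie; then $\ell\cap\partial K\supseteq[p,q]$ and $\ell'\cap\partial K\supseteq[r,s]$, and since $\ell,\ell'$ support $K$ as well (they support $\Gamma\subseteq K$... but wait, $\Gamma$ need not be contained in $K$). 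This is the main obstacle: the body $K$ witnessing c.s.c.\ position of six points need not contain all of $\Gamma$, so $\ell$ and $\ell'$ need not support $K$.

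To get around this, I would instead apply the $6$-point hypothesis more locally and combine it with Lemma~\ref{lem:parallelogram}: take the four extreme points $p,q,r,s$ as above; if $pq$ and $rs$ have the same length, we are done, so suppose not. Consider the admissible centers. By Lemma~\ref{lem:intersection} (or directly), $\M_{\{p,q,r,s\}}$ contains all admissible centers for the whole configuration, and since $pq\parallel rs$ the quadrilateral $p q s r$ is a trapezoid; I would show, using the lemmas of Section~2, that if $|pq|\ne|rs|$ then $\M$ of this trapezoid plus one more well-chosen point of $\Gamma$ (making $5$ points, then pad to $6$) is empty, contradicting the hypothesis. The main work is the case analysis for the trapezoid: computing $\M$ of a trapezoid inscribed in a convex curve and showing it forces the parallel sides to be equal, i.e.\ forces the trapezoid to be a parallelogram, after which Lemma~\ref{lem:parallelogram} pins the center to lie on one of two lines. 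I expect the trapezoid computation—showing the intersection of the relevant tallest-triangle admissible sets is empty unless the parallel chords match—to be the crux; everything else is a limiting argument to pass from the finitely many extreme points to the full segments $\ell\cap\Gamma$ and $\ell'\cap\Gamma$.
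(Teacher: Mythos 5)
You correctly identify the central obstacle: a centrally symmetric body $K$ witnessing that six points are in c.s.c.\ position need not contain $\Gamma$, so $\ell$ and $\ell'$ need not support $K$ and the naive reflection-of-parallel-chords argument does not apply. However, your proposed workaround has a structural flaw and is not carried out. The configuration you describe --- the (up to) four endpoints of $\ell\cap\Gamma$ and $\ell'\cap\Gamma$, plus ``one more well-chosen point,'' padded to six --- has only five essential points, and five points on a convex curve are \emph{always} in c.s.c.\ position; this is precisely the observation in the introduction showing $k\ge 6$ (even when three of the points are collinear, the degenerate-conic case still admits a centrally symmetric body, e.g.\ a suitable parallelogram or hexagon). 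In particular your hoped-for crux, ``computing $\M$ of a trapezoid inscribed in a convex curve and showing it forces the parallel sides to be equal,'' cannot succeed: a trapezoid with unequal parallel sides is four points in convex position and therefore has non-empty $\M$. No five-points-plus-padding configuration can yield the required contradiction, so the step you defer is not merely unfinished --- as planned, it is unachievable.

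The missing idea is a six-point configuration in which every point does work. Assuming $|\ell\cap\Gamma|>|\ell'\cap\Gamma|$, the paper chooses: the two endpoints \emph{and the midpoint} of $\ell\cap\Gamma$ (three collinear points force the entire segment onto $\partial K$, hence force $\ell$ to support $K$; central symmetry then places a segment of the same length on the opposite supporting line of $K$, and concavity of the chord-length function makes every chord of $K$ parallel to $\ell$ at intermediate heights at least that long); one point of $\ell'\cap\Gamma$ (forcing $K$ to reach at least as far as $\ell'$); and the two endpoints of a chord of $\Gamma$ parallel to $\ell$ whose length is strictly between $|\ell'\cap\Gamma|$ and $|\ell\cap\Gamma|$, which exists by continuity and lies strictly between $\ell$ and $\ell'$. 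Those last two points lie on $\partial K$ strictly between $\ell$ and the opposite supporting line of $K$, so they must be the endpoints of a chord of $K$ that is too short --- a contradiction. Your draft never pins down such a configuration, so the proof is incomplete.
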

\begin{rem}
We say that a point is a segment of length zero.
\end{rem}
\begin{proof}
Suppose that the length of $\ell\cap\Gamma$ is strictly greater than the length of $\ell'\cap\Gamma$. We choose six points $a,b,c,d,e,f\in\Gamma$ in counter-clockwise order such that $a$ and $c$ are the endpoints of $\ell\cap\Gamma$, $b$ is the midpoint of $\ell\cap\Gamma$, $e$ is a point on $\ell'\cap\Gamma$, and $df$ is a segment parallel to $\ell$ such that its length is strictly between lengths of $\ell\cap\Gamma$ and $\ell'\cap\Gamma$, see Figure \ref{fig:parsupport}.
\begin{figure}[ht]
  \includegraphics{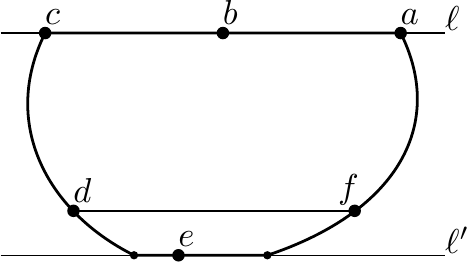}
  \caption{Parallel supporting lines cannot intersect $\Gamma$ at segments of different lengths.}
  \label{fig:parsupport}
\end{figure}

It is easy to see that these 6 points are not in c.s.c. which is a contradiction. Therefore the intersections $\ell\cap\Gamma$ and $\ell'\cap\Gamma$ have equal length.
\end{proof}

\begin{lem}\label{lem:inscribed-para}
Let $a$ be a smooth point of $\Gamma$ with tangent $\ell$, and let $b,c,d\in\Gamma$ be points such that $abcd$ is a parallelogram with sides not parallel to $\ell$. Then the line through $c$ parallel to $\ell$ supports $\Gamma$.
\end{lem}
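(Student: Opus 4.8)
\textbf{Proof proposal for Lemma \ref{lem:inscribed-para}.}

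The plan is to use the parallelogram $abcd$ itself, together with the smooth point $a$ and its tangent $\ell$, to constrain the position of any admissible center and then to deduce a supporting line at $c$. First I would invoke Lemma \ref{lem:parallelogram}: since every six points of $\Gamma$ are in c.s.c. position, in particular the four vertices $a,b,c,d$ of the parallelogram are in c.s.c. position together with any two further points of $\Gamma$; and the set of admissible centers of the parallelogram $abcd$ is the union of the two lines $m_1,m_2$ through its center, one parallel to $ab$ and one parallel to $bc$. So any global admissible center $O\in\M_\Gamma$ (which is nonempty by hypothesis, since $\Gamma$ is a curve all of whose $6$-tuples are in c.s.c. position) lies on $m_1\cup m_2$.

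Next I would use the smooth point $a$ to cut down the possibilities. The reflected curve $\Gamma_O$ must, together with $\Gamma$, be in convex position, so the tangent line $\ell$ at $a$ must be a supporting line of $\Gamma\cup\Gamma_O$; equivalently, reflecting through $O$, the line $\ell_O$ (parallel to $\ell$ through $2O-a$) must support $\Gamma$. Now if $O$ were on the line $m_1$ through the center of $P$ parallel to $ab$, then $2O-a$ would be the reflection of $a$ through a point on that line; one checks, using that the sides of the parallelogram are not parallel to $\ell$, that $\ell_O$ then separates some vertices of $P$ (for instance $c$) from others, so it cannot support $\Gamma$. Hence $O$ must lie on $m_2$, the line through the center of $P$ parallel to $ad=bc$. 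But the reflection of $a$ through the midpoint of $ac$ is $c$, and that midpoint is exactly the center of $P$, which lies on $m_2$; combining the constraint ``$O\in m_2$'' with the constraint coming from $\ell$ forces $O$ to be precisely the center of $P$. (This is the step I expect to be the main obstacle: pinning down that $O$ cannot be anywhere on $m_2$ except the center, rather than merely on the line. It should follow because as $O$ moves along $m_2$ away from the center, the point $2O-a$ moves along a line through $c$ parallel to $ad$, and $\ell_O$ — parallel to $\ell$, which is not parallel to $ad$ — sweeps across $\Gamma$ and fails to support it unless $2O-a=c$; alternatively one can feed a well-chosen sixth point of $\Gamma$ into a c.s.c. obstruction as in Lemma \ref{lem:length}.)

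Once $O$ is identified as the center of $P$, the conclusion is immediate: reflecting $a$ through $O$ gives $c$, and reflecting the tangent $\ell$ at $a$ through $O$ gives the line through $c$ parallel to $\ell$. Since $O\in\M_\Gamma$, the curve $\Gamma\cup\Gamma_O$ is in convex position and $\ell$ supports it at $a$; applying the central reflection through $O$, the line through $c$ parallel to $\ell$ supports $\Gamma\cup\Gamma_O$, hence in particular supports $\Gamma$, which is what we wanted.

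\textbf{Remark on the role of the hypotheses.} The condition that the sides of $abcd$ are not parallel to $\ell$ is exactly what is needed to rule out the component $m_1$ and to ensure the reflected tangent $\ell_O$ genuinely interacts with $\Gamma$; without it the reflected tangent could coincide with a direction already present as a side and the argument pinning $O$ to the center would break down. The smoothness of $a$ is used so that $\ell$ is an honest two-sided tangent and reflecting it gives an honest supporting line at $c$, not merely a one-sided tangent.
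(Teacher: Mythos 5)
Your argument has a fatal circularity at its first step: you assert that $\M_\Gamma$ is nonempty ``by hypothesis, since $\Gamma$ is a curve all of whose $6$-tuples are in c.s.c.\ position.'' That is not the hypothesis. The hypothesis only says that every \emph{six-point subset} of $\Gamma$ admits an admissible center; whether these local certificates can be glued into a single global admissible center is precisely the Helly-type question under study, and for a closed convex curve the statement $\M_\Gamma\neq\emptyset$ is equivalent to the conclusion of Theorem \ref{thm:curve} itself (if $O\in\M_\Gamma$, then $\Gamma\cup\Gamma_O$ lies on the boundary of a single convex body, which forces $\Gamma_O=\Gamma$, i.e.\ $\Gamma$ is already centrally symmetric about $O$). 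So everything you do afterwards --- locating $O$ on $m_1\cup m_2$, pinning it to the center of $P$, reflecting $\ell$ --- reasons about an object whose existence is the thing to be proved. Nor can the existence of $O$ be rescued by a routine compactness or Helly argument: each $\M_Y$ is a union of four convex pieces, three of them unbounded, so neither Helly's theorem nor the finite intersection property applies off the shelf.

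The fix is the one you gesture at in your parenthetical remark but do not carry out: run the entire argument inside a single six-point subset and derive a contradiction. The paper assumes the line $\ell'$ through $c$ parallel to $\ell$ fails to support $\Gamma$, so that $\ell'$ crosses the arc $bc$, and builds $X=\{a,b,c,d,x,y\}$ with $x$ on the arc $ab$ close to $a$ (smoothness of $a$ guarantees $x$ can be taken closer to $\ell$ than to the line through $a$ parallel to the one-sided tangent $\ell_+$ at $c$) and $y$ on the arc $bc$ close to $c$ beyond $\ell'$. By Lemma \ref{lem:parallelogram}, $\M_X$ is contained in the two lines through the center $O$ of $abcd$; the point $x$ excludes every point of the line parallel to $ab$ except $O$, the point $y$ excludes every point of the line parallel to $ad$ except $O$, and finally $a$ lies strictly inside the triangle $cxy'$ with $y'=2O-y$, which excludes $O$ itself. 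Hence $\M_X=\emptyset$, contradicting the hypothesis on six-point subsets. Your reflection-of-the-tangent idea is fine once an admissible center for a suitable finite set is in hand, but the lemma genuinely requires this local, contradiction-based construction rather than an appeal to a global center.
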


\begin{proof}
Let $\ell'$ be the line parallel to $\ell$ through $c$. Suppose $\ell'$ doesn't support $\Gamma$. We may assume that points $a,b,c,d$ determine a counter-clockwise orientation of $\Gamma$ and $\ell'$ intersects the arc $bc$ of $\Gamma$, see Figure \ref{fig:tangent} for more details. Let $\ell_+$ be the tangent of the arc $bc$ of $\Gamma$ at $c$ and let $\ell_+'$ be the line parallel to $\ell_+$ through $a$ (see Figure \ref{fig:tangent}).

\begin{figure}[ht]
  \includegraphics{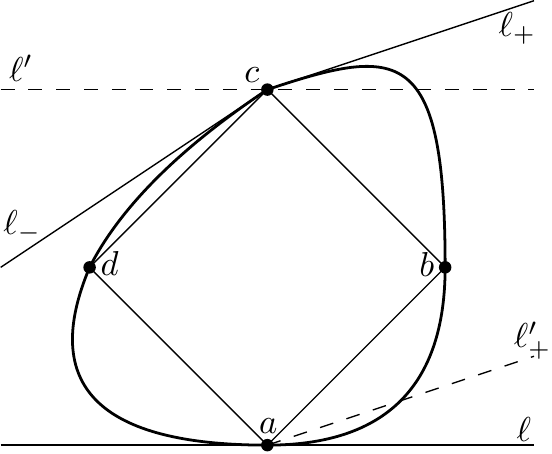}
  \caption{A curve with an inscribed parallelogram at a smooth point $a$.}
  \label{fig:tangent}
\end{figure}

We can choose a point $x$ on the arc $ab$ that is closer to $\ell$ than to $\ell_+'$, indeed, the line $\ell$ is the best linear approximation of $\Gamma$ at $a$, so each point of $\Gamma$ in a small neighborhood of $a$ is closer to $\ell$ than to $\ell_+'$. Similarly we can find a point $y$ on the arc $bc$ closer to $\ell_+$ than to $\ell'$.

For the set $X=\{a,b,c,d,x,y\}$ the set of admissible centers may consist only of the center $O$ of the parallelogram $abcd$. Indeed, according to the Lemma \ref{lem:parallelogram} $\M_X$ is contained in the union of two lines through $O$ parallel to $ab$ and $ad$. The point $x$ does not allow us to take any point in the line parallel to $ab$ except $O$ as an admissible center, and $y$ does not allow to take any point in the line parallel to $ad$ except $O$. If $y'$ is the reflection of $y$ with respect to $O$, then $a$ is strictly inside the triangle $cxy'$ as shown in Figure \ref{fig:points}. Thus we have found 6 points of $\Gamma$ which are not in c.s.c. position which is a contradiction.
\end{proof}

\begin{figure}[ht]
  \includegraphics{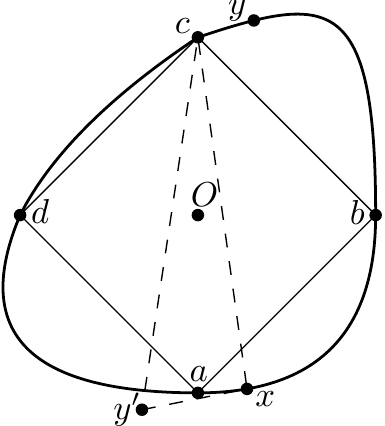}
  \caption{Six points in $\Gamma$ that are not in c.s.c position.}
  \label{fig:points}
\end{figure}

Now we proceed to the proof of the main result of this section.

\begin{proof}[Proof of Theorem \ref{thm:curve}]
The convexity of $\Gamma$ is trivial. There are two cases possible, either each supporting line of $\Gamma$ intersects $\Gamma$ at exactly one point (case 1), or there is a supporting line of $\Gamma$ that intersects $\Gamma$ in a segment of non-zero length (case 2).

\noindent{\bf Case 1.} Let $a$ be any smooth point of $\Gamma$, and let $\ell$ be the tangent of $\Gamma$ at $a$. Let $a'$ be the other point of $\Gamma$ with supporting line parallel to $\ell$.

Let $b$ be any point of $\Gamma$ other than $a$ and $a'$. The segment $ab$ is not an affine diameter of $\Gamma$, therefore there are points $c,d\in\Gamma$ such that $abcd$ is a parallelogram. From Lemma \ref{lem:inscribed-para} we get that the line through $c$ parallel to $\ell$ supports $\Gamma$ and therefore $c=a'$. Thus the central symmetry with the center at the midpoint of $aa'$ takes $b$ to another point of $\Gamma$ (the point $d$), and $\Gamma$ is centrally symmetric.

\noindent{\bf Case 2.} Let $ab$ be the intersection of $\Gamma$ with a support line $\ell$. From Lemma \ref{lem:length} we know that the other supporting line $\ell'$ of $\Gamma$ parallel to $\ell$ intersects $\Gamma$ in a segment $cd$ equal in length to $ab$. We may assume that the points $a,b,c,d$ are in counter-clockwise orientation, see Figure \ref{fig:segments}. 

\begin{figure}[ht]
  \includegraphics{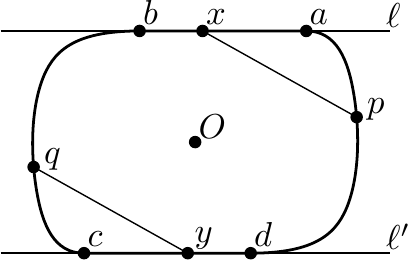}
  \caption{A curve with two equal parallel segments on the boundary.}
  \label{fig:segments}
\end{figure}

Let $p$ be a point in the interior of the arc $da$ of $\Gamma$ and $x$ be a point in the interior of the segment $ab$. Since $px$ is not an affine diameter of $\Gamma$, we can find two more points $q,y\in \Gamma$ (both depending on $x$ and $p$) such that $pxqy$ is a parallelogram. Using Lemma \ref{lem:inscribed-para} for the parallelogram $pxqy$ treating $x$ as the smooth vertex we conclude that the tangent to $\Gamma$ at $x$ is the line $\ell$, therefore the line parallel to $\ell$ through $y$ supports $\Gamma$, and $y$ belongs to the segment $cd$. Also, $q$ must be contained in the arc $bc$ of $\Gamma$. 

The center of the parallelogram $pxqy$ is equidistant from the lines $\ell$ and $\ell'$, therefore the distance from $q$ to $\ell$ is equal to the distance from $p$ to $\ell'$ and does not depend on $x$. This means that $q$ only depends on $p$ and not on $x$ and the same is true for the center $O$ of the parallelogram $pxqy$.

If for a fixed $p$ we vary $x$ in the open segment $ab$, then $y$ varies in the interior of $cd$ which has the same length as $ab$. This means that $cd$ is symmetric to $ab$ with respect to $O$ and $O$ is also the center of the parallelogram $abcd$. Thus $O$ does not depend on $p$.

Summarizing, we have shown that for every point $p$ on the arc $da$ of $\Gamma$ we can find another point $q$ of $\Gamma$ symmetric to $p$ with respect to the center of the parallelogram $abcd$. Therefore $O$ is the center of symmetry of $\Gamma$.
\end{proof}

\section{Acknowledgments}
The authors are thankful to Konrad Swanepoel for the interesting questions. We are also thankful to Imre Bárány and Jesús Jerónimo for many fruitful discussions while this work was in progress.

\end{document}